\let\oldcite=\cite 
\renewcommand\cite[1]{\ifthenelse{\equal{#1}{need}}{\todo{citation needed}}{\oldcite{#1}}}   
\newtheorem{theorem}{Theorem}
\newtheorem{lemma}[theorem]{Lemma}
\newtheorem{claim}{Claim}
\def\qed{\ifvmode\mbox{ }\else\unskip\fi\hskip 1em plus 10fill$\Box$}
\def\Ddots{\mathinner{\mkern1mu\raise\p@
\vbox{\kern7\p@\hbox{.}}\mkern2mu
\raise4\p@\hbox{.}\mkern2mu\raise7\p@\hbox{.}\mkern1mu}}
\author{
Jacob Fox\thanks{Department of Mathematics, Stanford University, Stanford, CA 94305. Email: {\tt jacobfox@stanford.edu}. Research supported by a Packard Fellowship, by NSF Career Award DMS-1352121 and by an Alfred P. Sloan Fellowship.}
\and
L\'aszl\'o Mikl\'os Lov\'asz\thanks{Department of Mathematics,  Massachusetts Institute of Technology,  Cambridge,  MA 02139-4307.   Email:
{\tt lmlovasz@math.mit.edu}.}
}
\title{\vspace{-0.7cm} {A tight bound for Green's arithmetic triangle removal lemma in vector spaces}}
\begin{document}
\maketitle 
\begin{abstract}
Let $p$ be a fixed prime. A triangle in $\mathbb{F}_p^n$ is an ordered triple $(x,y,z)$ of points satisfying $x+y+z=0$. Let $N=p^n=|\mathbb{F}_p^n|$. Green proved an arithmetic triangle removal lemma which says that for every $\epsilon>0$ and prime $p$, there is a $\delta>0$ such that if $X,Y,Z \subset \mathbb{F}_p^n$ and the number of triangles in $X \times Y \times Z$ is at most $\delta N^2$, then we can delete $\epsilon N$ elements from $X$, $Y$, and $Z$ and remove all triangles. Green posed the problem of improving the quantitative bounds on the arithmetic triangle removal lemma, and, in particular, asked whether a polynomial bound holds. Despite considerable attention, prior to this paper, the best known bound, due to the first author, showed that $1/\delta$ can be taken to be an exponential tower of twos of height logarithmic in $1/\epsilon$. 

We solve Green's problem, proving an essentially tight bound for Green's arithmetic triangle removal lemma in $\mathbb{F}_p^n$. We show that a polynomial bound holds, and further determine the best possible exponent. Namely, there is a explicit number $C_p$ such that we may take $\delta = (\epsilon/3)^{C_p}$, and we must have $\delta \leq \epsilon^{C_p-o(1)}$. In particular,  $C_2=1+1/(5/3 - \log_2 3) \approx 13.239$, and $C_3=1+1/c_3$ with $c_3=1-\frac{\log b}{\log 3}$, $b=a^{-2/3}+a^{1/3}+a^{4/3}$, and  $a=\frac{\sqrt{33}-1}{8}$, which gives $C_3 \approx 13.901$. The proof uses the essentially sharp bound on multicolored sum-free sets due to work of Kleinberg-Sawin-Speyer, Norin, and Pebody,  which builds on the recent breakthrough on the cap set problem by Croot-Lev-Pach, and the subsequent work by Ellenberg-Gijswijt, Blasiak-Church-Cohn-Grochow-Naslund-Sawin-Umans, and Alon.


\end{abstract}

\section{Introduction}

Let $p$ be a fixed prime.  A {\it triangle} in $\mathbb{F}_p^n$ is a triple $(x,y,z)$ of points with $x+y+z=0$. We use $N=|\mathbb{F}_p^n|=p^n$ throughout. The arithmetic triangle removal lemma for vector spaces over finite fields states that for each $\epsilon>0$ and prime $p$, there is a $\delta=\delta(\epsilon,p)>0$ such that if $X,Y,Z \subset \mathbb{F}_p^n$, and the number of triangles in $X \times Y \times Z$ is at most $\delta N^2$, then we can delete $\epsilon N$ points from $X$, $Y$, and $Z$ so that no triangle remains. It was originally proved by Green \cite{GREEN05} using a Fourier analytic arithmetic regularity lemma, with a bound on $1/\delta$ which is a tower of twos of height polynomial in $1/\epsilon$. Kr\'al', Serra, and Vena \cite{KSV09} showed that the arithmetic triangle removal lemma follows from the triangle removal lemma for graphs. Green's proof shows the result further holds in any abelian group, and the Kr\'al'-Serra-Vena proof shows that an analogue holds in any group. 

Green \cite{GREEN05} posed the problem of improving the quantitative bounds on the arithmetic triangle removal lemma, and, in particular, asked whether a polynomial bound holds. Green's problem has received considerable attention by many researchers \cite{BHATTACHARYYA13,BCSX10,BGRS12,BGS15,BX09,FOX11,FK14,GREEN05a,
GREEN05,HST16,HX15,RS11}. This is in part due to its applications and connections to several major open problems in number theory, combinatorics, and computer science. The first author \cite{FOX11} gave an improved bound on the triangle removal lemma for graphs. Together with the Kr\'al'-Serra-Vena reduction, it gives a bound on $1/\delta$ in the arithmetic triangle removal lemma which is a tower of twos of height logarithmic in $1/\epsilon$, see \cite{FOX11} for details. An alternative, Fourier-analytic proof of this bound in the case when $p=2$ was given by Hatami, Sachdeva, and Tulsiani \cite{HST16}. Prior to this paper, this tower-type bound was the best known bound for the arithmetic triangle removal lemma. 

In the case $p=2$, obtaining better upper bounds for $\delta$ has also received much attention due in part to its close connection to property testing, and, in particular, testing triangle-freeness. Bhattacharyya and Xie  \cite{BX09} were the first to give a nontrivial upper bound, showing that we must have $\delta \leq \epsilon^{4.847}$. Fu and Kleinberg \cite{FK14} provided a simple construction showing that we must have $\delta \leq \epsilon^{C_2-o(1)}$, where $C_2 \approx 13.239$ is defined below. It is based on a construction from Coppersmith and Winograd's famous matrix multiplication algorithm \cite{CW90}. It was widely conjectured that the exponent in the bound is not optimal (and that the bound is perhaps even superpolynomial), and Haviv and Xie \cite{HX15} introduced an approach to obtaining better upper bounds on the arithmetic triangle removal lemma.

In this paper, we solve Green's problem by proving an essentially tight bound for the arithmetic triangle removal lemma in vector spaces over finite fields. We show that a polynomial bound holds, and further determine the best possible exponent for every prime $p$.
It involves, for every $p$, constants $c_p$ which will be between $0$ and $1$ and will be defined below, and $C_p=1+1/c_p$. In particular, we have\footnote{Here, and throughout the paper, all logarithms are base $2$.} $c_2=5/3-\log 3 \approx .0817$ and $C_2=1+1/c_2 \approx 13.239$ as in Fu and Kleinberg \cite{FK14}, and $c_3 = 1-\frac{\log b}{\log 3}$, where $b=a^{-2/3}+a^{1/3}+a^{4/3}$, and $a=\frac{\sqrt{33}-1}{8}$. This gives $c_3 \approx .0775$, and $C_3 \approx 13.901$. 

\begin{theorem} \label{main}
Let $0< \epsilon < 1$ and $\delta =  (\epsilon/3)^{C_p}$. If $X,Y,Z \subset \mathbb{F}_p^n$  with less than $\delta N^2$ triangles in $X \times Y \times Z$, then we can remove $\epsilon N$ elements from $X \cup Y \cup Z$ so that no triangle remains. Furthermore, this bound is essentially tight in that the minimum possible $\delta=\delta(\epsilon,p)$ in the arithmetic triangle removal lemma satisfies  $\delta \leq \epsilon^{C_p-o(1)}$, where the $o(1)$ term goes to $0$ for $p$ fixed and $\epsilon \to 0$. 
\end{theorem}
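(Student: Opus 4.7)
The plan is to reduce both directions to the essentially tight bound on tricolored sum-free sets in $\mathbb F_p^n$, i.e., families $\{(a_i,b_i,c_i)\}_{i=1}^s$ with $a_i+b_i+c_i=0$ and $a_i+b_j+c_k=0$ only for $i=j=k$. By the work of Kleinberg--Sawin--Speyer, Norin, and Pebody (built on Croot--Lev--Pach and Ellenberg--Gijswijt), any such family satisfies $s\leq N^{1-c_p}$; the Fu--Kleinberg construction attains $s\geq N^{1-c_p-o(1)}$. Both constants $c_p$ are exactly the ones in the theorem.

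\emph{Upper bound.} Assume toward a contradiction that $X,Y,Z\subset\mathbb F_p^n$ has fewer than $(\epsilon/3)^{C_p}N^2$ triangles yet no set of $\epsilon N$ vertices covers them all, so $\tau>\epsilon N$. My aim is to extract a tricolored sum-free set in $\mathbb F_p^n$ of size at least $\epsilon N/3$. Once this is done, KSS forces $\epsilon N/3\leq N^{1-c_p}$, i.e.\ $N\leq(3/\epsilon)^{1/c_p}$; since each of its triples is a triangle of $X\times Y\times Z$, the triangle count is already at least $\epsilon N/3$, and combining this with the bound on $N$ gives $\delta N^2\geq\epsilon N/3\geq(\epsilon/3)^{C_p}N^2$, the desired contradiction. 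To build the tricolored sum-free set, I start from the $3$-uniform matching--cover inequality: a greedy maximum matching has size at least $\tau/3>\epsilon N/3$, yielding vertex-disjoint triangles $(a_i,b_i,c_i)$. A structural observation unique to the ``sum zero'' hypergraph is that any putative cross relation $a_i+b_j+c_k=0$ with only two distinct indices among $i,j,k$ simplifies, via $a_\ell+b_\ell+c_\ell=0$, to an equality of two same-color coordinates, which is forbidden by vertex-disjointness. Only genuine three-cycle relations with $i,j,k$ pairwise distinct can obstruct tricolored sum-freeness, and each such relation is itself a triangle in $X\times Y\times Z$. The plan is to control their number using the assumed triangle upper bound and then perform an alteration step that deletes one triple per offending three-cycle to extract a tricolored sum-free subfamily of size still $\Omega(\epsilon N)$, with the constants carefully arranged so the factor of $3$ from the matching--cover inequality is exactly the one appearing in $(\epsilon/3)^{C_p}$.

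\emph{Tightness.} This direction is clean. Take a tricolored sum-free family $\{(a_i,b_i,c_i)\}_{i=1}^s\subset\mathbb F_p^n$ of size $s\geq N^{1-c_p-o(1)}$ from the Fu--Kleinberg construction, and let $X=\{a_i\},Y=\{b_i\},Z=\{c_i\}$. The sum-free property ensures that the triangles of $X\times Y\times Z$ are \emph{exactly} the $s$ given triples, so any triangle-destroying removal must pick one vertex per triple, hence at least $s$ vertices. Choosing $\epsilon$ so that $\epsilon N=s-1\approx N^{1-c_p-o(1)}$, hence $\epsilon\approx N^{-c_p-o(1)}$ and $N\approx\epsilon^{-1/c_p-o(1)}$, defeats the conclusion of the triangle removal lemma; the triangle count is $s=\epsilon N$, so $\delta\geq s/N^2=\epsilon/N=\epsilon^{1+1/c_p+o(1)}=\epsilon^{C_p+o(1)}$, which is $\leq\epsilon^{C_p-o(1)}$ for $\epsilon<1$.

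The main obstacle will be the extraction step: unlike the two-triple case, where vertex-disjointness alone suffices for tricolored sum-freeness, three or more triples admit genuine three-cycle cross relations that must be removed by alteration. The delicate point is balancing the cost of this alteration against the triangle-count hypothesis so that no more than a constant factor is lost and the final bound lands exactly on $(\epsilon/3)^{C_p}$ rather than a weaker polynomial. A secondary subtlety is invoking the KSS bound in a form sharp enough -- ideally $s\leq N^{1-c_p}$ without an extra multiplicative constant -- that no slack creeps into the exponent, and verifying that the Fu--Kleinberg construction matches the KSS upper bound on $c_p$ to $o(1)$ in the exponent (which is what makes the tightness statement ``essentially'' tight rather than tight up to a polylogarithmic factor).
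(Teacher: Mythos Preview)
Your tightness argument is essentially correct and matches the paper's in spirit, modulo a minor issue you should address: a tricolored sum-free family need not have $X,Y,Z$ pairwise disjoint as \emph{sets}, so removing a single element of $X\cup Y\cup Z$ could in principle hit several of your triples at once; the paper handles this with a small trick (appending fixed coordinates, or blowing up by $\mathbb F_p^\ell$), and you should too.

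The real problem is in the removal-lemma direction. Your plan is to extract from the $m>\epsilon N/3$ vertex-disjoint triangles a tricolored sum-free subfamily of size $\Omega(\epsilon N)$ by deleting one triple per offending cross-relation $a_i+b_j+c_k=0$ with $i,j,k$ distinct, using the hypothesis that there are fewer than $(\epsilon/3)^{C_p}N^2$ such relations. But this alteration cannot succeed in general. The number of cross-relations you must kill is bounded only by $(\epsilon/3)^{C_p}N^2$, and as soon as $N>(3/\epsilon)^{1/c_p}$ this quantity exceeds $\epsilon N/3$, so the deletion budget wipes out the whole family. Worse, in that regime the goal itself is unattainable: the very Kleinberg--Sawin--Speyer bound you want to invoke says \emph{every} tricolored sum-free set in $\mathbb F_p^n$ has size at most $N^{1-c_p}$, which is $o(\epsilon N)$ once $N\gg (1/\epsilon)^{1/c_p}$. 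So no extraction procedure of any kind can produce a sum-free subfamily of size $\Omega(\epsilon N)$ in the ambient space; your scheme proves the theorem only in the narrow window where $\epsilon\approx N^{-c_p}$, and the final inequality $\epsilon N/3\ge(\epsilon/3)^{C_p}N^2$ you write down is simply false for large $N$.

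The paper gets around this by never trying to build a large sum-free set in $\mathbb F_p^n$. Instead, after a degree-regularization step ensuring every vertex lies in at most $\rho N$ triangles, it passes to a \emph{random subspace} $U$ of dimension $d\approx\log_p(1/\rho)$. Inside $U$ the expected number of surviving triangles is roughly $\delta N^2\cdot p^{2d}/N^2$, and conditioning on a given triangle landing in $U$, the degree bound makes it ``good'' (the unique triangle through each of its vertices) with constant probability. The good triangles form a sum-free family in the $p^d$-point space $U$, so KSS bounds their number by $p^{(1-c_p)d}$; comparing with the expectation yields $\delta\lesssim\rho^{1+c_p}$. Coupled with the regularization $\rho\approx\delta/\epsilon$, this gives $\delta\ge\epsilon^{C_p+o(1)}$, and a product-amplification trick then removes the $o(1)$. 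The random-subspace idea is the missing ingredient in your proposal.
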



Following a breakthrough by Croot, Lev, and Pach \cite{CLP16} on $3$-term arithmetic progressions in $\mathbb{Z}_4^n$, Ellenberg and Gijswijt \cite{EG16} showed that their method can be used to greatly improve the upper bound on the cap set problem, and more generally to bound the size of a set in $\mathbb{F}_p^n$ with no $3$-term arithmetic progressions. 
Further work by Blasiak-Church-Cohn-Grochow-Naslund-Sawin-Umans \cite{BCCGNSU16}, and independently Alon, established that their proof also shows an upper bound on the following multicolored sum-free theorem over $\mathbb{F}_p^n$. 
Later work by Kleinberg, Sawin, and Speyer \cite{KSS16}, conditional on a conjecture that was later proven by Norin \cite{NORIN16}, and Pebody \cite{PEBODY16} 
shows that the exponent $c_p$ obtained for the upper bound is sharp. The exponent is given by
\[p^{1-c_p} = \inf_{0<x<1} x^{-(p-1)/3}\left(x^0+x^1+\cdots+x^{p-1}\right)
.\]
It is not difficult to check that $c_p=\Theta((\log p)^{-1})$ and hence $C_p=\Theta(\log p)$. 




\begin{theorem} \label{sumsetresult} \cite{KSS16}
Given a collection of ordered triples $\{(x_i,y_i,z_i)\}_{i=1}^m$ in $\mathbb{F}_p^n$ such that $x_i+y_j+z_k=0$ holds if and only if $i=j=k$, the size of the collection satisfies the bound
\[m \le p^{(1-c_p)n}.\]
Furthermore, there exists such a collection with $m \ge p^{(1-c_p)n-o(n)}$.
\end{theorem}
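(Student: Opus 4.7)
For the \textbf{upper bound}, my plan is to use the slice rank method of Tao applied to a Croot--Lev--Pach / Ellenberg--Gijswijt-style polynomial. Define
\[
P(x,y,z) \;=\; \prod_{\ell=1}^{n} \bigl(1 - (x_\ell + y_\ell + z_\ell)^{p-1}\bigr),
\]
which, by Fermat's little theorem, evaluates to $1$ exactly when $x+y+z=0$ in $\mathbb{F}_p^n$. Form the tensor $T:[m]^3 \to \mathbb{F}_p$ with $T(i,j,k) = P(x_i,y_j,z_k)$. The hypothesis that $x_i+y_j+z_k=0$ iff $i=j=k$ makes $T$ the identity tensor $\delta_{i=j=k}$, whose slice rank is exactly $m$ by Tao's diagonal lemma.

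Now expand $P$ as a sum of monomials in the $3n$ variables, each individual exponent lying in $\{0,\ldots,p-1\}$; the total degree is at most $n(p-1)$. By pigeonhole, in every monomial at least one of the three coordinate blocks (the $x$-, $y$-, or $z$-exponents) has total degree at most $n(p-1)/3$. Partitioning monomials according to which block achieves this expresses $T$ as a sum of three tensors, each factoring through a variable block of dimension at most
\[
M_n \;:=\; \bigl|\{\alpha \in \{0,\ldots,p-1\}^n : \alpha_1+\cdots+\alpha_n \le n(p-1)/3\}\bigr|,
\]
so $m \le 3 M_n$. A Chernoff/Cram\'er tail bound via the generating function $(1+x+\cdots+x^{p-1})^n$ gives $M_n \le \inf_{0<x<1}\bigl[x^{-(p-1)/3}(1+x+\cdots+x^{p-1})\bigr]^n = p^{(1-c_p)n}$, matching the definition of $c_p$. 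To remove the factor of $3$, apply the bound to the $k$-fold product configuration in $\mathbb{F}_p^{nk}$ and let $k\to\infty$.

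For the \textbf{lower bound}, I would implement the Kleinberg--Sawin--Speyer / Norin / Pebody construction. The idea is to pick coordinate triples $\bigl((x_i)_\ell,(y_i)_\ell,(z_i)_\ell\bigr) \in \{(a,b,c)\in\mathbb{F}_p^3 : a+b+c=0\}$ independently in each coordinate $\ell$, using a distribution whose marginals on each of the three coordinates coincide with the optimal Lagrangian measure $\mu^{\ast}$ which assigns weight proportional to $(x^{\ast})^j$ to $j\in\{0,\ldots,p-1\}$, where $x^{\ast}$ is the minimizer of $x^{-(p-1)/3}(1+x+\cdots+x^{p-1})$. By large-deviation concentration, the set of ``typical'' product vectors has size $p^{(1-c_p)n - o(n)}$. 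One then extracts a subfamily on which the uniqueness condition holds by a tricolored-matching / LP-duality argument in the polytope of fractional tricolored sum-free configurations.

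The \textbf{main obstacle} is the lower bound: the upper bound is essentially a clean computation once the slice-rank framework is in place, but matching it requires showing that the fractional LP optimum is attainable (in the tensor-power asymptotic sense) by an integer family. This integrality statement is precisely the content of the Norin and Pebody contributions, proved via a Hall-type argument on a weighted bipartite structure on admissible tuples combined with a tensor-power compactness argument that drives the loss into the $o(n)$ in the exponent.
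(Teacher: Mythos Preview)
The paper does not actually prove Theorem~\ref{sumsetresult}; it is quoted as an external result (the upper bound from Blasiak--Church--Cohn--Grochow--Naslund--Sawin--Umans and Alon, the matching lower bound from Kleinberg--Sawin--Speyer together with Norin and Pebody), and is then used as a black box inside the proof of Lemma~\ref{roughlyeven}. So there is no ``paper's own proof'' to compare against.

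That said, your sketch is an accurate summary of how the cited works establish the theorem. The upper-bound half is essentially complete as written: the slice-rank argument on the Croot--Lev--Pach polynomial, the pigeonhole over the three variable blocks, the Cram\'er bound identifying $M_n \le p^{(1-c_p)n}$, and the tensor-power trick to absorb the factor~$3$ are all correct and standard. For the lower bound you have correctly identified the architecture (choose coordinatewise triples with marginals matching the optimizing distribution, then extract a genuine tricolored sum-free family by an LP/Hall-type argument), and you are right that the substantive difficulty---the integrality/feasibility step handled by Norin and Pebody---is where the real work lies. Your description of that step is a roadmap rather than a proof, so if you were asked to \emph{prove} the lower bound you would still need to carry out the polytope argument in detail; but as a plan it is on target.
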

The lower bound in the case of $p=2$ was established earlier by Fu and Kleinberg \cite{FK14}.

We note that while our proof of Theorem \ref{main}, determining the exponential constant, relies on Theorem \ref{sumsetresult}, the existence of the exponential constant follows from the existence of a polynomial bound (which is implied by the earlier work of Ellenberg and Gijswijt \cite{EG16}), and a simple product trick, discussed in Section \ref{lowerbound}.

We give our proof of the lower bound in the next section. In the following section, for completeness, we give the argument that shows the matching upper bound. We finish with some concluding remarks.

\section{Proof of the lower bound} \label{lowerbound}
Recall that $N=p^n$. We prove the following theorem, which is roughly equivalent to Theorem \ref{main}.
\begin{theorem} \label{main1}
Suppose we have $m=\epsilon N$ disjoint triangles $\{(x_i,y_i,z_i)\}_{i=1}^m$ in $\mathbb{F}_p^n$. Let $\delta=\epsilon^{C_p}$. Then, for $X=\{x_i\}_{i=1}^m$, $Y=\{y_i\}_{i=1}^m$, and $Z=\{z_i\}_{i=1}^m$, we have at least $\delta N^2$ triangles in $X \times Y \times Z$.
\end{theorem}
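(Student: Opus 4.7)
The strategy combines Theorem~\ref{sumsetresult} (the tight multicolored sum-free bound) with random alteration and a tensor-product / product trick argument.

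I begin with a structural observation: among the $m$ disjoint diagonal triples, any triangle $(i,j,l)\in[m]^3$ with $x_i+y_j+z_l=0$ that is not of the form $(t,t,t)$ must have $i,j,l$ pairwise distinct. Indeed, if $i=j$, then combining $x_i+y_i+z_l=0$ with $x_i+y_i+z_i=0$ yields $z_l=z_i$, and by vertex-disjointness of the triangles $l=i$. Hence the off-diagonal triangles form a linear $3$-partite $3$-graph on $[m]$.

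Next I apply random alteration. For a probability $q\in(0,1]$ to be optimized, include each $i\in[m]$ independently with probability $q$ in a random set $S$. Then $\mathbb{E}[|S|]=qm$ and the expected number of off-diagonal triangles contained in $S\times S\times S$ equals $q^3(T-m)$. Deleting one index per such triangle yields a subset $S'\subseteq S$ with $\mathbb{E}[|S'|]\ge qm-q^3(T-m)$ whose corresponding diagonals $(x_i,y_i,z_i)_{i\in S'}$ form a multicolored sum-free family. Applying Theorem~\ref{sumsetresult}, $|S'|\le N^{1-c_p}$, and optimizing $q$ gives the preliminary bound
\[ T \;\ge\; m + \tfrac{4}{27}\,\frac{m^{3}}{N^{2(1-c_p)}}. \]

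To reach the sharp exponent $C_p=1+1/c_p$ for all $\epsilon>N^{-c_p}$, I pass to the $k$-fold tensor of the configuration, which has $m^k$ diagonals and $T^k$ total triangles in $\mathbb{F}_p^{nk}$, and invoke the product trick: any bound of the form $T\ge c(p)\cdot\epsilon^{C_p}N^{2}$ with a constant $c(p)>0$ depending only on $p$ tensorizes to $T^{k}\ge c(p)(\epsilon^{C_p}N^{2})^{k}$, and taking $k$-th roots yields $T\ge\epsilon^{C_p}N^{2}$ as $k\to\infty$; the factor of $3$ in the clean statement $\delta=(\epsilon/3)^{C_p}$ of Theorem~\ref{main} arises when converting an arbitrary configuration to one with $m$ disjoint triangles. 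The principal obstacle is thus to establish this initial bound with the correct exponent $C_p$, since naive random sampling on the tensor preserves the suboptimal exponent $3$ on $m$ rather than $C_p$. I would attempt to close this gap by applying Theorem~\ref{sumsetresult} selectively to subsets of $[m]^{k}$ indexed by empirical type---weighting the sampling by these types---and optimizing over types using the entropic characterization $p^{1-c_p}=\inf_{0<x<1}x^{-(p-1)/3}\sum_{i=0}^{p-1}x^{i}$ that defines $c_p$, so that the sharp exponent emerges from the limiting Lagrangian optimization.
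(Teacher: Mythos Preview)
Your preliminary bound $T \ge m + \tfrac{4}{27}\,m^{3}/N^{2(1-c_p)}$ is correct, but it carries the wrong exponent: writing $m=\epsilon N$, it yields $\delta\gtrsim \epsilon^{3}N^{2c_p-1}$, whereas the target is $\delta\ge\epsilon^{C_p}$ with $C_p=1+1/c_p$. These agree only at the threshold $\epsilon=N^{-c_p}$ and your bound is strictly weaker for all larger $\epsilon$. Crucially, the tensor trick cannot repair this: passing to the $k$-fold product sends $(m,N,T)\mapsto(m^{k},N^{k},T^{k})$, so the ratio $\log m/\log N$ is preserved and your bound on the tensor is exactly the $k$-th power of your bound on the original. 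The product argument only removes $o(1)$ corrections to an exponent that is already correct; it cannot move the exponent from $3$ to $C_p$. Your proposed remedy---sampling tuples in $[m]^{k}$ according to ``empirical type'' and invoking the entropic formula for $c_p$---does not connect to anything concrete: the index set $[m]$ carries no additive structure, so types of index tuples have no relation to the polynomial-method bound, and the formula $p^{1-c_p}=\inf_{x}x^{-(p-1)/3}\sum x^{i}$ describes monomial weights in $\mathbb{F}_p^{n}$, not multiplicities in $[m]^{k}$.

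The idea you are missing, and which the paper exploits, is to sample a random \emph{subspace} rather than a random set of indices. If one intersects $X,Y,Z$ with a uniformly random $d$-dimensional subspace $U$, then (after a degree-regularization step ensuring every vertex lies in at most $\rho N$ triangles) the expected number of ``good'' triangles surviving in $U$ is $\asymp \delta p^{2d}$, while the degree of each surviving vertex is $\asymp \rho p^{d}$. Choosing $d$ so that $\rho p^{d}\asymp 1$ makes the good triangles a multicolored sum-free system \emph{inside} $U\cong\mathbb{F}_p^{d}$, so Theorem~\ref{sumsetresult} bounds their number by $p^{(1-c_p)d}$ rather than by $N^{1-c_p}$. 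This is precisely what turns the exponent from $3$ into $1+c_p$ on $\rho$, and hence into $C_p$ on $\epsilon$ after the regularization. Your random-index deletion always lands back in $\mathbb{F}_p^{n}$ and can never access the tighter bound $p^{(1-c_p)d}$ for $d\ll n$; that is why the argument stalls at exponent $3$.
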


\begin{proof}[Proof that Theorem \ref{main1} implies Theorem \ref{main}] Suppose we have $X$, $Y$, and $Z$ such that we cannot remove $\epsilon N$ points to remove all triangles. In this case, we can find a set of at least $\frac{\epsilon}{3}N$ disjoint triangles $(x_i,y_i,z_i)$. Indeed, start taking out disjoint triangles greedily from $X \times Y \times Z$. If at some point we cannot take any more and we have taken less than $\frac{\epsilon}{3}N$ triangles, then remove any point in any of these triangles. We would have removed less than $\epsilon N$ points and we would have no triangles left, a contradiction. So we can find $\frac{\epsilon}{3}N$ disjoint triangles, and so we can apply Theorem \ref{main1} with $\epsilon/3$ to find $\delta N^2$ triangles, a contradiction.
\end{proof}

To obtain Theorem \ref{main1}, we first prove the following weaker, asymptotic version. We will later use a simple product amplification trick to remove the $o(1)$ in the exponent and obtain Theorem \ref{main1}. 
\begin{theorem} \label{mainasympt}
Suppose we have $m=\epsilon N$ disjoint triangles $\{(x_i,y_i,z_i)\}_{i=1}^m$ in $\mathbb{F}_p^n$. Then, for $X=\{x_i\}_{i=1}^m$, $Y=\{y_i\}_{i=1}^m$, and $Z=\{z_i\}_{i=1}^m$, we have at least $\delta N^2$ triangles in $X \times Y \times Z$, where $\delta=\epsilon^{C_p+o(1)}$. 
\end{theorem}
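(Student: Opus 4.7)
The plan is to combine a probabilistic extraction with Theorem \ref{sumsetresult} (the upper bound on multicolored sum-free collections). Let $T$ denote the number of triangles in $X \times Y \times Z$; the goal is to show $T \geq \epsilon^{C_p + o(1)} N^2$.

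First I would observe that because the $m$ triangles are disjoint, $X$, $Y$, $Z$ each consist of $m$ distinct elements, and any ``non-diagonal'' triangle $(x_l, y_{l'}, z_{l''}) \in X \times Y \times Z$ with $x_l + y_{l'} + z_{l''} = 0$ and $(l, l', l'') \neq (i,i,i)$ must have $l, l', l''$ pairwise distinct: for example, $l = l'$ combined with $x_l + y_l + z_l = 0$ would force $z_{l''} = z_l$ and hence $l'' = l$, contradicting the assumption. Consequently the number of bad ordered index triples is exactly $T - m$, and each involves three distinct indices.

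Next I would sample each $i \in [m]$ independently with probability $q$ to obtain $I \subseteq [m]$. The expected size of $I$ is $qm$ and the expected number of bad triples entirely within $I$ is $q^3(T-m)$. Removing one index per surviving bad triple yields a multicolored sum-free sub-collection $\{(x_i, y_i, z_i)\}_{i \in I'}$ of expected size at least $qm - q^3(T-m)$. Theorem \ref{sumsetresult} bounds this by $p^{(1-c_p)n} = N^{1-c_p}$ for every $q$, and optimizing at $q = \Theta(\sqrt{m/T})$ yields the polynomial bound $T \geq \Omega(m^3 / N^{2-2c_p})$.

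To obtain the sharp exponent $C_p = 1 + 1/c_p$ I would split into two regimes. In the sparse regime $m \leq N^{1-c_p}$ the trivial bound $T \geq m$ already suffices, since $\epsilon \leq N^{-c_p}$ gives $\epsilon^{C_p} N^2 = \epsilon \cdot \epsilon^{1/c_p} \cdot N^2 \leq \epsilon \cdot N^{-1} \cdot N^2 = m$. In the dense regime $m > N^{1-c_p}$, the random-sampling bound $\Omega(m^3/N^{2-2c_p})$ falls short of $\epsilon^{C_p} N^2$ by a polynomial factor. Here I would apply the extraction argument inside a tensor power $(X, Y, Z)^t \subset \mathbb{F}_p^{nt}$ (with $m^t$ disjoint triangles and $T^t$ total triangles), choosing $t$ so that the effective parameters land at the sparse/dense boundary in the enlarged ambient space. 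Matching parameters so that the extracted collection saturates Theorem \ref{sumsetresult} gives $T \geq \epsilon^{C_p + o(1)} N^2$, with the $o(1)$ capturing finite-$t$ corrections that will be removed by the subsequent product-amplification step used to deduce Theorem \ref{main1}.

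The main technical obstacle is the dense regime: naive random sampling is off by a polynomial factor, and closing the gap so that the exponent matches $C_p = 1 + 1/c_p$ exactly (rather than the random-sampling value $3/(2-2c_p)$, which agrees with $C_p$ only at the boundary $m = N^{1-c_p}$) is the delicate part of the argument.
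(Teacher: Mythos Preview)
Your sparse-regime argument and the random-index-sampling bound $T \ge \Omega(m^3/N^{2-2c_p})$ are both correct, but the dense-regime repair via tensor powers does not work. Under the $t$-fold tensor power you have $m' = m^t$, $N' = N^t$, $T' = T^t$, so the ratio $m'/(N')^{1-c_p} = (m/N^{1-c_p})^t$ moves \emph{away} from $1$ as $t$ grows, not toward it; you can never ``land at the sparse/dense boundary'' unless you started there. Moreover, applying your sampling argument inside the tensor power just yields $T^t \ge \Omega(m^{3t}/N^{(2-2c_p)t})$, the $t$-th power of the same inequality. Tensoring amplifies an asymptotically correct exponent (which is how the paper passes from Theorem \ref{mainasympt} to Theorem \ref{main1}), but it cannot improve a polynomially wrong one.

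The paper's proof closes exactly this gap by a structurally different sampling step: instead of sampling indices, it samples a random \emph{subspace} $U$ of dimension $d$. Because a triangle spans a $2$-plane, conditioning on one triangle lying in $U$ makes each competing triangle through a vertex survive with probability about $p^{d}/N$, and choosing $d$ so that $p^d \approx 1/\rho$ (where $\rho N$ bounds the maximum degree) makes a constant fraction of surviving triangles ``good'', i.e., multicolored sum-free inside $U$. The crucial gain is that Theorem \ref{sumsetresult} is now applied in the ambient space $\mathbb{F}_p^{d}$, giving the cap $p^{(1-c_p)d} \approx \rho^{-(1-c_p)}$ rather than $N^{1-c_p}$. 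Combined with a degree-regularization step (iteratively deleting high-degree vertices so that $\rho \approx \delta/\epsilon$ up to polylog factors), this yields $\delta \lesssim \rho^{1+c_p}$ and hence $\epsilon \lesssim \delta^{c_p/(1+c_p)}$, which is the exponent $C_p = 1 + 1/c_p$. Your index-sampling argument never shrinks the ambient space, so it cannot access this sharper form of Theorem \ref{sumsetresult}; that is the missing idea.
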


We first prove two lemmas. The following lemma bounds the total number of triangles based on the maximal degree of a point in the union of the sets. In particular, this proves the theorem in the case when the maximum degree is not much larger than the average degree.

\begin{lemma} \label{roughlyeven}
Suppose $0<\rho \le 1/(5p^3)$ and we have disjoint sets $X,Y,Z \subset \mathbb{F}_p^n \setminus \{0\}$, such that any two vectors from their union are linearly independent, any two-dimensional subspace contains at most one triangle in $X \times Y \times Z$, and each element in $X \cup Y \cup Z$ is in at most $\rho N$ triangles in $X \times Y \times Z$. Let $\delta N^2$ be the total number of triangles in $X \times Y \times Z$. Then $\delta \le 125p^2 \rho^{1+c_p}$.
\end{lemma}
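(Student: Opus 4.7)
The plan is to extract a multicolored sum-free subset of $\mathcal{T}$ by random restriction to a low-dimensional linear subspace, and then invoke Theorem~\ref{sumsetresult} in that subspace. Let $k$ be the largest integer with $p^k \le 1/(3\rho)$, let $V \subset \mathbb{F}_p^n$ be a uniformly random $k$-dimensional linear subspace, and write $N' := p^k$ and $\mathcal{T}_V$ for the set of triangles of $\mathcal{T}$ all of whose three vertices lie in $V$. Because $V$ is a subspace and any two vertices of a triangle are linearly independent by hypothesis, $T \in \mathcal{T}_V$ iff any two of its vertices lie in $V$; the Gaussian binomial identities give $P(T \in \mathcal{T}_V) = \Theta((N'/N)^2)$, so $E[|\mathcal{T}_V|] = \Theta(\delta N'^2)$.

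Next I would bound the two obstructions to $\mathcal{T}_V$ being multicolored sum-free. A \emph{matching violation} is a pair of distinct triangles in $\mathcal{T}_V$ sharing a vertex $v$; by the 2D subspace hypothesis the triple $(v, y_T, y_{T'})$ is linearly independent, so both triangles survive with probability $\Theta((N'/N)^3)$. Summing over at most $3md/2$ such pairs in $\mathcal{T}$ (via $\sum_v \binom{\deg v}{2} \le (d-1)m/2$ together with $d \le \rho N$), the expected count in $V$ is $O(\rho N') \cdot E[|\mathcal{T}_V|]$. A \emph{cross-triangle violation} is a triple $(T_1,T_2,T_3) \in \mathcal{T}_V^3$, not all equal, with $x_{T_1}+y_{T_2}+z_{T_3} = 0$; the nine coordinates involved satisfy four linearly independent relations (three from the individual triangles and one from the cross-triangle equation, which forces $x_{T_3}+y_{T_3}=x_{T_1}+y_{T_2}$), so their span has dimension at most $5$, and the three triangles jointly survive in $V$ with probability $\Theta((N'/N)^5)$. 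Since $\mathcal{T}^3$ contains at most $m(\rho N)^3$ such triples, the expected count in $V$ is $O((\rho N')^3) \cdot E[|\mathcal{T}_V|]$.

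With $\rho N' \le 1/3$ both expected violation counts are strictly less than $E[|\mathcal{T}_V|]/2$ in total. Deleting at most one triangle per violation yields a multicolored sum-free subset $S \subseteq \mathcal{T}_V$ with $E[|S|] = \Omega(\delta N'^2)$; fixing a realization of $V$ meeting this bound, Theorem~\ref{sumsetresult} applied in $V \cong \mathbb{F}_p^k$ gives $|S| \le (N')^{1-c_p}$. Rearranging yields $\delta \lesssim (N')^{-1-c_p}$. Since $N' \ge 1/(3p\rho)$ (from maximality of $k$), this gives $\delta \le 125 p^2 \rho^{1+c_p}$ once all constants are collected; the $p^2$ factor is precisely the loss from the fact that $N'$ is only guaranteed to be $\ge 1/(3p\rho)$ rather than $\approx 1/(3\rho)$, due to rounding $k$ to an integer.

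The main obstacle is the careful bookkeeping of constants from the Gaussian binomial probability estimates and the combined losses from the two kinds of violations, together with verifying that the hypothesis $\rho \le 1/(5p^3)$ makes $k$ large enough that both the three-dimensional estimate (for matching violations) and the five-dimensional estimate (for cross-triangle violations) are valid. This bookkeeping is where the specific constants $125$ and $p^2$ emerge, and where the $1/(5p^3)$ bound on $\rho$ is used rather than some other explicit constant.
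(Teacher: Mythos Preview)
Your overall approach is the same as the paper's: restrict to a random subspace of dimension $k$ chosen so that $\rho p^k$ is a small constant, and apply the multicolored sum-free bound there. The paper's execution is cleaner in one key respect. Rather than separately tracking matching and cross-triangle violations, the paper calls a triangle $(x,y,z) \in X' \times Y' \times Z'$ \emph{good} if it is the unique triangle in all of $X' \times Y' \times Z'$ through each of $x$, $y$, $z$, and observes that the set of good triangles is automatically multicolored sum-free: if $x_i + y_j + z_k = 0$ for good triples $i,j,k$, then $(x_i,y_j,z_k)$ is itself a triangle in $X' \times Y' \times Z'$ containing $x_i$, hence must equal $(x_i,y_i,z_i)$, forcing $i=j=k$. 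A single conditional-probability calculation then shows each surviving triangle is good with probability at least $2/5$, giving $E[\text{good}] \ge \tfrac{2}{5}E[|\mathcal{T}_V|]$. Your separate cross-triangle count is therefore unnecessary once you use this stronger notion of ``good'' (vertex has degree one among \emph{all} triangles in $V$, not merely among the triangles you are retaining).

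There is also a direction error in your cross-triangle step as written: from ``the nine points span a subspace of dimension at most $5$'' you infer survival probability $\Theta((N'/N)^5)$, but an \emph{upper} bound on the span dimension only yields a \emph{lower} bound on the survival probability. To upper-bound the expected number of cross-triangle violations you would need the span to have dimension \emph{at least} $5$, which fails in degenerate configurations (for instance when two of $T_1,T_2,T_3$ share a vertex). These cases can be controlled separately, but the paper's route avoids the issue entirely.
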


\begin{proof}
Let $d=\lfloor \frac{\log (1/(5\rho))}{\log p} \rfloor$, so $1/(5p) < \rho p^d \le 1/5$. 
Note that $d \ge 3$. Take a uniformly random subspace $U$ of $\mathbb{F}_p^n$ of dimension $d$. Let $X'=X \cap U$, and define $Y'$ and $Z'$ analogously. We call a triangle $(x,y,z) \in X' \times Y' \times Z'$ \emph{good} if it is the unique triangle in $X' \times Y' \times Z'$ containing $x$, $y$, or $z$.

\begin{claim}
Given a triangle $(x,y,z) \in X \times Y \times Z$, conditioned on $x \in X',y \in Y',z \in Z'$, the probability that it is good is at least $2/5$.
\end{claim}

\begin{proof}
We know that $x$ is in at most $\rho N$ triangles in $X \times Y \times Z$. These are each formed with a different element of $Y$. Let $Y_1$ be the set of elements in $Y$ that together with $x$ are in a triangle in $X\times Y \times Z$. By assumption, they each generate a two-dimensional subspace with $x$, and different elements of $Y$ generate different two-dimensional subspaces. For any element in $Y_1$ not equal to $y$, the probability that it is in $U$, conditioned on $x,y,z \in U$, is
\[\frac{p^{d-2}-1}{p^{n-2}-1}.
\]
Thus, using the union bound, the probability that any other element of $Y_1$ is in $U$ is at most
\[\rho N \frac{p^{d-2}-1}{p^{n-2}-1}=\rho p^n\frac{p^{d-2}-1}{p^{n-2}-1} \le \rho p^n \frac{p^{d-2}}{p^{n-2}}= \rho p^d \le \frac{1}{5}.
\]
So, conditioned on $x \in X'$, $y \in Y'$, $z \in Z'$, the probability that either $x$, $y$, or $z$ is in another triangle in $X' \times Y' \times Z'$ that is not in the two-dimensional subspace generated by $\{x,y,z\}$ is at most $\frac{3}{5}$.
\end{proof}

%

\begin{claim}
Let $T$ be the number of good triangles. Then $\mathbb{E}[T] \ge\frac{\delta}{125p^2\rho^2}$.
\end{claim}

\begin{proof}
For each triangle in $X \times Y \times Z$, it has a probability of $\frac{(p^d-1)(p^{d-1}-1)}{(p^n-1)(p^{n-1}-1)}$ of being in $X' \times Y' \times Z'$, and conditioned on this, it has a probability of at least $2/5$ of being good. Since there are $\delta N^2$ triangles in total,
\[\mathbb{E}[T] \ge \delta N^2\frac{2}{5}\frac{(p^d-1)(p^{d-1}-1)}{(p^n-1)(p^{n-1}-1)} \ge \frac{1}{5}\delta p^{2n} \frac{p^{2d}}{p^{2n}} \ge \frac{\delta}{125p^2\rho^2}
\]
\end{proof}

Observe that the set of good triangles satisfies the hypothesis of Theorem \ref{sumsetresult}.
%
Therefore, 
$T$ is always at most
\[p^{(1-c_p)d}.
\]
We thus have
\[p^{(1-c_p)d} \ge \mathbb{E}[T] \ge \frac{\delta}{125p^2\rho^2} ,
\] which, combined with the fact that $p^d \le \frac{1}{5\rho} \le \frac{1}{\rho}$, implies that 
\[ \delta \le 125p^2\rho^2 (p^d)^{1-c_p} \le 125p^2 \rho^{1+c_p}.\]
This completes the proof of Lemma \ref{roughlyeven}.
\end{proof}

Let $0<a_p\le 5/p^4$ and $g:(0,a_p] \to \mathbb{R}^+$ be any function such that $g(\beta)$ increases as $\beta$ decreases, but $g(\beta)\beta$ and $\beta^{c_p}g(\beta)^{1+c_p}$ both decrease as $\beta$ decreases, and $\sum_{i = 1}^{\infty} \frac{1}{g(2^{-i}a_p)} \leq 1/4$. We will later take for convenience $g(\beta)=\log^2(1/\beta)$, which satisfies the conditions for some $a_p>0$.

\begin{lemma} \label{lemmadisjointtriples}
Let $0<\delta \le a_p$ and $\epsilon \ge  (125p^2)^{\frac{1}{1+c_{p}}} \delta^{\frac{c_p}{1+c_p}} g(\delta)$. Suppose $m=\epsilon N$ and we have a collection of pairwise disjoint triangles $(x_i,y_i,z_i)$ for $i \in [m]$, and let $X=\{x_1,x_2,\ldots,x_{m}\}$, $Y=\{y_1,y_2,\ldots,y_m\}$, and $Z=\{z_1,z_2,\ldots,z_m\}$. Suppose that any two vectors from $X \cup Y \cup Z$ are linearly independent, and any two-dimensional subspace contains at most one triangle. Then there must be at least $\delta N^2$ triangles in $X \times Y \times Z$.
\end{lemma}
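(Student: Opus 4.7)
My plan is to argue by contradiction. Assume $T := \#\{(x,y,z) \in X \times Y \times Z : x+y+z=0\} < \delta N^2$, and derive a contradiction with the hypothesis on $\epsilon$.

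The central tool is Lemma \ref{roughlyeven}. Set $\rho_i := 2^{-i} a_p$ for $i \geq 0$, and let $H_i := \{v \in X \cup Y \cup Z : d(v) > \rho_i N\}$. By the identity $3T = \sum_v d(v)$ and the contradiction assumption, $|H_i| < 3\delta N/\rho_i$. Since the $m$ disjoint triangles are vertex-disjoint, at most $|H_i|$ of them contain a vertex in $H_i$; the rest have all three vertices of degree at most $\rho_i N$. Applying Lemma \ref{roughlyeven} to the sub-configuration on the complementary vertices (which inherits the structural hypotheses on linear independence and $2$-dimensional subspaces) gives at most $125 p^2 \rho_i^{1+c_p} N^2$ triangles, and in particular at most that many disjoint triangles. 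So at each level $i$ we obtain $m \leq |H_i| + 125 p^2 \rho_i^{1+c_p} N^2 < 3\delta N/\rho_i + 125 p^2 \rho_i^{1+c_p} N^2$.

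A single level does not close the argument: at the scale $\rho$ where the two terms on the right are comparable, the combined bound yields only $\epsilon \lesssim \delta^{(1+c_p)/(2+c_p)} N^{1/(2+c_p)}$, which falls short of the hypothesis by precisely a factor comparable to $g(\delta)$. My plan is therefore to iterate across dyadic scales: partition the disjoint triangles by their peak dyadic level---the unique $i$ such that the maximum of the three vertex degrees lies in $(\rho_{i+1} N, \rho_i N]$---and use Lemma \ref{roughlyeven} level by level. The monotonicity of $g(\beta)\beta$ and $\beta^{c_p} g(\beta)^{1+c_p}$, together with the summability $\sum_i 1/g(2^{-i} a_p) \leq 1/4$, are tailored so that one can absorb a multiplicative loss of $1/g(\rho_i)$ at each level while keeping a constant fraction of the $m$ disjoint triangles concentrated at the dominant scale $\rho \approx (\delta/(125 p^2))^{1/(1+c_p)} g(\delta)$. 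Combined with the degree-sum inequality $\sum_{\text{peak vertices}} d(v) \leq 3T$, which lower-bounds the contribution of peak stars to $T$, this closes the argument and contradicts the hypothesis on $\epsilon$.

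The main obstacle is the precise dyadic bookkeeping required to balance the linear-in-$N$ upper bound $|H_i| < 3\delta N/\rho_i$ against the quadratic-in-$N$ upper bound $125 p^2 \rho_i^{1+c_p} N^2$ from Lemma \ref{roughlyeven} across all levels simultaneously. The iteration must be set up so that the cumulative loss of disjoint triangles across levels sums to at most a small constant (matching the $1/4$ in the summability condition on $g$), while the residual at the dominant scale yields the claimed $\delta N^2$ lower bound on $T$. Making this telescoping rigorous---and in particular verifying that the logarithmic safety factor $g(\delta) = \log^2(1/\delta)$ exactly compensates the single-level deficit---is the technical heart of the argument.
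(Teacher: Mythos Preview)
Your proposal has a genuine gap stemming from a dimensional mismatch. In the single-level inequality $m \leq |H_i| + 125p^2 \rho_i^{1+c_p} N^2$, the left side is $\epsilon N$ while the second term on the right is of order $N^2$. As you correctly compute, optimizing over $\rho$ gives $\epsilon \lesssim \delta^{(1+c_p)/(2+c_p)} N^{1/(2+c_p)}$, but this extraneous factor of $N^{1/(2+c_p)}$ is \emph{not} comparable to $g(\delta)$: it grows without bound as $N \to \infty$ for fixed $\epsilon,\delta$, so for large $N$ there is no contradiction with the hypothesis on $\epsilon$. Iterating across dyadic levels of $\rho$ cannot repair this, since at every level the bound on $m_i$ coming from Lemma~\ref{roughlyeven} is of order $N^2$ while $m_i \leq m = O(N)$; summing such bounds (or combining them with the degree-sum constraint $\sum_i m_i \rho_i \lesssim \delta N$) still yields an $N$-dependent estimate of exactly the same shape. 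In short, using Lemma~\ref{roughlyeven} only to upper-bound the number of \emph{disjoint} surviving triangles throws away a full factor of $N$.

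The paper's argument avoids this by never trying to bound $m$ via Lemma~\ref{roughlyeven}. Instead one removes vertices one at a time with an \emph{adaptive} threshold: whenever some vertex lies in more than $g(\delta')\frac{\delta'}{\epsilon}N$ triangles, where $\delta'N^2$ is the \emph{current} triangle count, remove it. A dyadic count on $\delta'$ (this is where the summability condition on $g$ enters) shows at most $m/2$ vertices are removed. The final configuration then has some $\delta'N^2$ triangles and max degree $\rho N$ with $\rho = g(\delta')\delta'/\epsilon$, so Lemma~\ref{roughlyeven} gives $\delta' \leq 125p^2\bigl(g(\delta')\delta'/\epsilon\bigr)^{1+c_p}$, a purely dimensionless relation between $\delta'$ and $\epsilon$ that contradicts the hypothesis. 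The crucial difference from your plan is that both the triangle count $\delta'$ \emph{and} the degree bound $\rho$ of the final configuration are fed into Lemma~\ref{roughlyeven} together, rather than using $\rho$ alone to cap a count that is intrinsically $O(N)$.
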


\begin{proof}
Suppose for contradiction that we have less than $\delta N^2$ triangles. 
We would like to obtain large subsets $X_1 \subset X$, $Y_1 \subset Y$, $Z_1 \subset Z$ such that no element of $X_1$, $Y_1$, or $Z_1$ is contained in significantly more triangles in $X_1 \times Y_1 \times Z_1$ than the average. This will allow us to apply Lemma \ref{roughlyeven} to obtain 
a contradiction.

We do this by removing bad points from the sets one at a time, based on the density of triangles. 
Suppose that after removing a certain number of points, we are left with $\delta' N^2$ triangles. 
If there exists a point that is in at least $g(\delta') \frac{\delta'}{\epsilon}N$ triangles, we remove it. We then update $\delta'$ and repeat.
We claim that when this process ends, we only removed at most $m/2$ points. Indeed, for any $\beta$, while the number of triangles is between $2\beta N^2$ and $\beta N^2$, we remove at least $g(\beta)\frac{\beta}{\epsilon} N$ triangles in each step, but since throughout this period the number of triangles is at most $2\beta N^2$, this happens at most $2\beta N^2/\left(g(\beta)\frac{\beta}{\epsilon} N\right) =\frac{2\epsilon}{g(\beta)}N$ times. Overall, this implies that we end up removing at most
\[\frac{2\epsilon}{g(\delta/2)}N+\frac{2\epsilon}{g(\delta/4)}N+\cdots \le \frac{\epsilon}{2}N
\]
points. So after this process, we are left with sets $X',Y',Z'$ which contain at least $\frac{\epsilon}{2} N$ disjoint triangles between them, and for some $\delta'$, we have $\delta' N^2$ triangles in $X' \times Y' \times Z'$, and every point in each set is in at most $g(\delta')\frac{\delta'}{\epsilon} N$ such triangles.
%

We can now apply Lemma \ref{roughlyeven} with 
\[\rho=g(\delta')\frac{\delta'}{\epsilon} \le g(\delta) \frac{\delta}{\epsilon} \le \left( \frac{\delta}{125p^2} \right)^{\frac{1}{1+c_p}} \le \frac{1}{5p^3}
.\]
Here the first inequality follows from the fact that $g(\beta) \beta$ decreases as $\beta$ decreases, the second from the bound on $\epsilon$, and the third from the bound $\delta \le a_p \le 5/p^4$ and the fact that $0<c_p<1$.
The lemma implies that 
\[\delta' \le 125p^2 \left(g(\delta')\frac{\delta'}{\epsilon}\right)^{1+c_p}
\]
which means that 
\[\epsilon \le  (125p^2)^{\frac{1}{1+c_{p}}} \delta'^{\frac{c_p}{1+c_p}} g(\delta')
\le (125p^2)^{\frac{1}{1+c_{p}}} \delta^{\frac{c_p}{1+c_p}} g(\delta),
\]
where we used for the second inequality that if $\delta' < a_p$, then $\delta'^{c_p}g(\delta')^{1+c_p}$ increases as $\delta'$ increases.
\end{proof}

\begin{proof}[Proof of Theorem \ref{mainasympt}]
We are now ready to prove Theorem \ref{mainasympt}. First, we would like to argue that we may assume that the sets $X$, $Y$, and $Z$ are pairwise disjoint, any two vectors are linearly independent, and any two-dimensional subspace contains at most one triangle in $X \times Y \times Z$. Let us work in $\mathbb{F}_p^{n+2}$, so we add two coordinates to each vector. For each $x_i$, we define $x_i'=(x_i,1,0)$. We also take $y_i'=(y_i,-1,1)$, and $z_i'=(z_i,0,-1)$. Then, for each $i$, we still have $x_i'+y_i'+z_i'=0$, and if we take  $X'=\{x_i'\}_{i=1}^m$, $Y'=\{y_i'\}_{i=1}^m$, and $Z'=\{z_i'\}_{i=1}^m$, then the triangles in $X' \times Y' \times Z'$ correspond exactly to the triangles in $X \times Y \times Z$. It is easy to see that the three new sets are disjoint, simply because of their last two coordinates. The last two coordinates also imply that if we multiply a point from one of the sets by a scalar not equal to $1$, we cannot be in the union of the sets. Thus, any two vectors from $X \cup Y \cup Z$ are linearly independent. Finally, fix a triangle $(x_i,y_j,z_k)$, and let $U$ be the two-dimensional subspace generated by them. Suppose that for some $j'$ with $j \ne j'$, the subspace generated by $x_i$ and $y_j$ contains $y_{j'}$. This means that 
\[y_{j'} = \alpha x_i + \beta y_j.
\] Just by looking at the last two coordinates, we clearly must have $\alpha=0$ and $\beta=1$, but then $y_j=y_{j'}$, a contradiction. This means that any triangle contained in $U$ must contain $y_j$. It is easy to analogously show that any triangle in $U$ must contain $x_i$. This implies that the only triangle contained in $U$ is $(x_i,y_j,z_k)$. Therefore, any two-dimensional subspace contains at most one triangle. 

Since the size of the underlying space is now $N'=p^2N$, we obtain $\epsilon'N'$ triangles with $\epsilon'=\epsilon/p^2$.
Using Lemma \ref{lemmadisjointtriples}, if we take $\delta'=\delta/p^4$, we have that there must be at least $\delta' N'^2=\delta N^2$ triangles in total, provided that we have
\[\frac{\epsilon}{p^2} >  (125p^2)^{\frac{1}{1+c_p}} (\delta/p^4) ^{\frac{c_p}{1+c_p}} g(\delta/p^4).
\] 
With our choice of $g(\rho)=(\log(1/\rho))^2$, if we take 
$\delta = \Omega_p\Big(\epsilon^{1+1/c_p}(\log(1/\epsilon))^{-\frac{2+2c_p}{c_p}}\Big)$, the conditions of the lemma are satisfied if $\epsilon$ (and thus $\delta$) are small enough. This gives $\delta \ge \epsilon^{C_p+o(1)}$.
\end{proof}

We now tighten the asymptotic bound in Theorem \ref{mainasympt} to prove Theorem \ref{main1} with a product argument, inspired by a similar one Kleinberg, Sawin, and Speyer \cite{KSS16} used for the multicolor sum-free problem.
\begin{proof}[Proof of Theorem \ref{main1}]
Suppose we have $m=\epsilon N$ disjoint triangles $\{(x_i,y_i,z_i)\}_{i=1}^m$ in $(\mathbb{F}_p^n)^3$. Define $X=\{x_i\}_{i=1}^m$, $Y=\{y_i\}_{i=1}^m$, $Z=\{z_i\}_{i=1}^m$, and suppose for the sake of contradiction there are only  $\delta N^2$ triangles in $X \times Y \times Z$ with $\delta <\epsilon^{C_p}$, so we can write $\delta=\epsilon^{(1+\alpha)C_p}$ for some $\alpha>0$. We define for every positive integer $k$ a collection of $m^k$ disjoint triangles in $(\mathbb{F}_p^{nk})^3$. For $k$ vectors $x_1,x_2,...,x_k$ in $\mathbb{F}_p^n$, we can take their concatenation $(x_1,x_2,...,x_k) \in \mathbb{F}_p^{nk}$. Then, we can take for any $k$-tuple $i_1,i_2,...,i_k$ the triangle $\left((x_{i_1},x_{i_2},...,x_{i_k}),(y_{i_1},y_{i_2},...,y_{i_k}),(z_{i_1},z_{i_2},...,z_{i_k})\right)$. It is easy to see that these are indeed triangles and disjoint, and if we define $X_k$, $Y_k$, and $Z_k$ as subsets of $\mathbb{F}_p^{nk}$ analogously to $X$, $Y$, and $Z$, then $X_k=X^k$, $Y_k=Y^k$, and $Z_k=Z^k$. Then any triangle must be a triangle in the first $n$ coordinates, in the second $n$ coordinates, and so on. Thus, the number of triangles in $X^k \times Y^k \times Z^k$ is $\delta^k N^{2k}=\epsilon^{(1+\alpha)C_pk}N^{2k}$. However, by Theorem \ref{mainasympt} applied to $X^k \times Y^k \times Z^k$, we must have 
\[ (\epsilon^k)^{C_p +o(1)} \le \delta^k = (\epsilon^k)^{(1+\alpha)C_p}, 
\]
where the $o(1)$ term tends to $0$ as $\epsilon^k$ tends to $0$. 
Letting $k \rightarrow \infty$, we obtain a contradiction. We conclude that $\delta \geq \epsilon^{C_p}$, completing the proof of Theorem \ref{main1}.
\end{proof}


\section{Proof of the upper bound}
For completeness, we now give a proof of the last part of Theorem \ref{main}, which shows that the lower bound is tight.

For $p=2$, using their lower bound construction discussed in Theorem \ref{sumsetresult}, Fu and Kleinberg \cite{FK14} provided a simple construction that implies an upper bound on $\delta$ in terms of $\epsilon$, which shows that the exponent $C_2$ in Theorem \ref{main} is tight. That a lower bound for the multicolor sum-free problem can be turned into an upper bound for the arithmetic removal lemma was first observed by Bhattacharyya and Xie  \cite{BX09}; see also \cite{HX15}. The argument works in general for $\mathbb{F}_p$, so by the result of Kleinberg, Sawin, and Speyer \cite{KSS16}, the exponent $C_p$ is tight for each $p$.

We now present a construction based on that of Bhattacharyya and Xie \cite{BX09}. Recall that the multicolored sum-free problem asks to find a collection of ordered triples $\{(x_i,y_i,z_i)\}_{i=1}^m$ in $\mathbb{F}_p^n$ such that $x_i+y_j+z_k=0$ holds if and only if $i=j=k$. We show that if there is such a collection of $m=p^{(1-c_p)n-o(n)}$ triples, then there are examples that show that in the removal lemma over $\mathbb{F}_p$, we must take 
\[\delta \le \epsilon^{C_p-o(1)}
.\]
Given such a set of $m$ triples $(x_i,y_i,z_i)$, with $X=\{x_i\}_{i=1}^m$, $Y=\{y_i\}_{i=1}^m$, $Z=\{z_i\}_{i=1}^m$, define, for any positive integer $l$, subsets $X',Y',Z' \subset \mathbb{F}_p^{n+l}$ by taking $X'=X \times \mathbb{F}_p^l$, $Y'=Y \times \mathbb{F}_p^l$, and $Z'=Z \times \mathbb{F}_p^l$. 

First, we claim that if we want to delete elements from $X'$, $Y'$, and $Z'$ such that no triangles remain, we must delete at least $mp^l$ elements in total. Fix $i \in [m]$, and look at $x_i \times \mathbb{F}_p^l, y_i \times \mathbb{F}_p^l, z_i \times \mathbb{F}_p^l$. These are all contained in $X'$, $Y'$, and $Z'$ respectively, so suppose that we have removed less than $p^l$ of these $3 \cdot p^l$ elements. Define $X_i$, $Y_i$, and $Z_i$ so that we have kept $x_i \times X_i$, $y_i \times Y_i$, and $z_i \times Z_i$, and note that they must each be nonempty. Since $x_i+y_i+z_i=0$, we must have that $X_i+Y_i$ is disjoint from $Z_i$. However, if we take any $z \in Z_i$, the set $z-Y_i$ must intersect the set $X_i$, since $|X_i|+|Y_i|>p^l$. This means that we must have at least one triangle, a contradiction.

Furthermore, we claim that the total number of triangles in $X' \times Y' \times Z'$ is $mp^{2l}$. Indeed, if $(x_i,x')+(y_j,y')+(z_k,z')=0$, then in particular, we must have $x_i+y_j+z_k=0$, so $i=j=k$, and in this case, we can choose anything in $\mathbb{F}_p^l$ for $x'$ and for $y'$, and then the choice of $z'$ is determined.

This means that taking 
\[\epsilon = \frac{mp^l}{p^{n+l}}=\frac{m}{p^n},\hspace{1cm}\delta = \frac{mp^{2l}}{p^{2n+2l}}=\frac{m}{p^{2n}},
\]
we obtain a construction of an infinite family of sets $X_l,Y_l,Z_l \subset \mathbb{F}_p^{n+l}$ such that there are at most $\delta p^{2n+2l}$ triangles between the sets, but we have to remove at least $\epsilon p^{n+l}$ points to remove all triangles. 

Since
\[m=p^{(1-c_p)n-o(n)},
\]
we have
\[\epsilon=p^{-n(c_p + o(1))},
\hspace{1cm}
\delta=p^{-n((1+c_p) + o(1))}.
\]
Thus, as $n \rightarrow \infty$, we have that $\epsilon \rightarrow 0$, and (recalling that $C_p=1+1/c_p$), we have
\[\delta \le \epsilon^{C_p-o(1)}.
\]




\section{Concluding remarks} 
We expect that these methods will also apply to similarly give tight polynomial bounds for removal lemmas of linear equations in more variables in vector spaces over $\mathbb{F}_p$. Sets that contain no arithmetic progressions have also been studied over other abelian groups, see \cite{BCCGNSU16} and \cite{PETROV16}. We expect these methods can be used in those settings as well to improve bounds on the corresponding removal lemma. We also expect further applications in property testing. 

The triangle removal lemma states that every graph on $n$ vertices with $o(n^3)$ triangles can be made triangle-free by removing $o(n^2)$ edges. One consequence of this, known as the diamond-free lemma, is that every graph on $n$ vertices in which each edge is in precisely one triangle has $o(n^2)$ edges. The diamond-free lemma is also equivalent to the Ruzsa-Szemer\'edi induced matching theorem, or the extremal $(6,3)$-theorem. The argument presented here for getting Green's arithmetic removal lemma from the tri-colored sum-free result can be adapted, by replacing random subspaces by random subsets, to show that the diamond-free lemma implies the triangle removal lemma if the bound on the diamond-free lemma is sufficiently good, and any such bound on the diamond-free lemma can be turned into a similar bound on the triangle removal lemma. 

\noindent {\bf Acknowledgements.} We would like to thank Lisa Sauermann for many helpful comments, including two simplifications of our proof. In particular, she pointed out that a lower bound on the degrees in Lemma \ref{roughlyeven} is not necessary. We would also like to thank Noga Alon, Thomas Church, Robert Kleinberg, Will Sawin, Benny Sudakov, Madhu Sudan, and Terence Tao for helpful comments.  
\bibliography{ref}

\bibliographystyle{amsplain_mod2}
\end{document}